\def\del{\partial}
\def\BVi{\operatorname{BV}_\infty}
\def\Dh{\widehat{D}}
\def\dhat{\widehat{d}}
\def\g{\mathfrak g}
\def\h{\mathfrak h}
\def\IBLi{\operatorname{IBL}_\infty}
\def\Li{L_\infty}
\def\m{\mathfrak m}
\def\sfe{{\sf e}}
\def\F{\mathcal{F}}
\def\M{\mathcal{M}}
\def\OO{\mathcal{O}}
\def\P{\mathcal{P}}
\def\nz{\mathbb Z}
\DeclareMathOperator{\Ad}{Ad}
\DeclareMathOperator{\ad}{ad}
\DeclareMathOperator{\BViAlg}{\mathcal{BV}_\infty-\mathcal{A}\textit{lg}}
\DeclareMathOperator{\CLAlg}{\mathcal{CLA}\textit{lg}}
\DeclareMathOperator{\CDGCAlg}{\mathcal{CDGCA}\textit{lg}}
\DeclareMathOperator{\CoCom}{coCom}
\DeclareMathOperator{\Coder}{Coder}
\DeclareMathOperator{\CoFrob}{coFrob}
\DeclareMathOperator{\Com}{Com}
\DeclareMathOperator{\End}{\mathcal{E}\!\textit{nd}}
\DeclareMathOperator{\FLAff}{\mathcal{FLA}\textit{ff}}
\DeclareMathOperator{\Frob}{Frob}
\DeclareMathOperator{\Gr}{Gr}
\DeclareMathOperator{\Hom}{Hom}
\DeclareMathOperator{\Ima}{Im}
\DeclareMathOperator{\op}{op}
\DeclareMathOperator{\MC}{MC}
\DeclareMathOperator{\LiAlg}{\Li-\mathcal{A}\textit{lg}}
\DeclareMathOperator{\Mor}{Mor}
\DeclareMathOperator{\BViSp}{\mathcal{BV}_\infty-\mathcal{S}\textit{p}}
\DeclareMathOperator{\PFDGMan}{\mathcal{PFDGM}\textit{an}}
\DeclareMathOperator{\QM}{QM}
\DeclareMathOperator{\Set}{\mathcal{S}\!\textit{et}}
\DeclareMathOperator{\Spec}{Spec}
\providecommand{\abs}[1]{\lvert#1\rvert}
\newtheorem{theorem}{Theorem}[section]
\newtheorem{metatheorem}[theorem]{Metatheorem}
\newtheorem{corollary}[theorem]{Corollary}
\newenvironment{metatheorem-quoted}[1]
  {\innercustomthm}
  {\endinnercustomthm}
\theoremstyle{definition}
\newtheorem{definition}[theorem]{Definition}
\newtheorem{example}[theorem]{Example}
\theoremstyle{remark}
\newtheorem*{remark}{Remark}
\title{Quantizing Deformation Theory II}
\author[A. A. Voronov]{Alexander A. Voronov} \email{voronov@umn.edu}
\address {School of Mathematics\\University of Minnesota\\
  Minneapolis, MN 55455, USA, and Kavli IPMU (WPI), UTIAS, University
  of Tokyo, Kashiwa, Chiba 277-8583, Japan}
\date{July 6, 2018}
\dedicatory{Dedicated to my teacher Yuri Ivanovich Manin on the
  occasion of his eightieth birthday.}
\begin{document}

\begin{abstract}
A quantization of classical deformation theory, based on the
Mau\-rer-Cartan Equation $dS + \frac{1}{2}[S,S] = 0$ in dg-Lie
algebras, a theory based on the Quantum Master Equation $dS + \hbar
\Delta S + \frac{1}{2} \{S,S\} = 0$ in dg-BV-algebras, is
proposed. Representability theorems for solutions of the Quantum
Master Equation are proven. Examples of ``quantum'' deformations are
presented.
\end{abstract}

\maketitle

\tableofcontents

\section*{Introduction}

Yuri I. Manin has always been fascinated with the concept of
quantization. Observing the chromatic spectrum of his work over the
years, I have become more and more convinced that you may quantize
more than you expect.

In this paper, I suggest an approach to quantizing deformation
theory. Neither the idea, nor the terminology is new: I am referring
to John Terilla's paper \cite{terilla} on \emph{Quantizing Deformation
  Theory}, precluded by his work \cite{park-terilla-tradler} with
Jae-Suk Park and Thomas Tradler. This partially explains the title of
the current paper, which I view as a complement to Terilla's work. I
hint on a relation in the last section of this paper. I believe
strongly that these works are two tips of one and the same iceberg.

I have been running around disseminating vague ideas of quantum
deformation theory for a few years since John hooked me on quantizing
deformation theory during the historic Northeast blackout of
2003. Alas, it is a pity it took me so long to get these ideas
crystallized, but now I am content with their shining, albeit somewhat
superficial.


\subsection*{Conventions on graded algebra and geometry}

We will work over a ground field $k$ of characteristic zero. In
particular, the symbol $\otimes$ will mean $\otimes_k$ by default. The
translation $V[n]$ of a graded vector space $V = \bigoplus_{n \in \nz}
V^n$ is the same vector space with a redefined degree: $V[n]^p :=
V^{p+n}$. The dual $V^*$ of a graded vector space $V$ is understood as
the direct sum of the duals of its graded components, graded in such a
way that the natural pairing $V^* \otimes V \to k$ is
grading-preserving.  In particular, $(V[n])^* = V^*[-n]$. One can also
write $V^* = \hom_k (V, k)$ with $\hom_k (V,V')$ being the internal
Hom in the category of graded vector spaces, as opposed to $\Hom_k (V,
V') = \hom_k (V,V')^0$, the vector space of degree-preserving linear
maps. By default, the degree of a homogeneous tensor is the sum of the
degrees of its factors.  Differentials $d$ are assumed to have degree
1 by default: $\abs{d} = 1$. However, the BV operator $\Delta$ will
have degree $-1$. All associative algebras are assumed to be unital
and all coassociative coalgebras to be counital.

For the purpose of this paper, we will mostly work with \emph{pointed
  formal graded manifolds} that are actually pointed formal
\emph{graded affine spaces}. These are determined by graded symmetric
coalgebras of the type $S(V)$, where $V$ is a graded vector space. The
(graded) cocommutative, coassociative comultiplication on $S(V)$ is
taken to be the standard shuffle comultiplication. We think of the
linear dual algebra $S(V)^*$ as the algebra of functions in a formal
neighborhood of 0 in $V$. The \emph{basepoint}, which corresponds to
the origin 0 in the vector space $V$, is given by the coaugmentation
$k = S^0(V) \to S(V)$.

A \emph{morphism $V \to W$ of pointed formal graded manifolds} in our
restricted, linear category is just a morphism $S(V) \to S(W)$ of
coalgebras respecting the coaugmentations. Since the coalgebra $S(W)$
is cofree (in the category of conilpotent cocommutative coalgebras),
such a morphism is determined by a degree-zero linear map $S(V) \to
W$. Compatibility with coaugmentations forces this linear map to
vanish on $k = S^0(V)$.

When we talk about a linear \emph{pointed formal differential graded
  $($dg-$\,)$ manifold} $V$, we assume that it is a linear pointed
formal graded manifold $V$ endowed with a \emph{differential},
\emph{i.e}., the structure coalgebra $S(V)$ is endowed with a
\emph{codifferential}, a degree-one, $k$-linear coderivation $D$ of
satisfying $D^2 = 0$ and vanishing on $S^0(V)$. A codifferential, like
any coderivation on a cofree cocommutative coalgebra, is determined by
a degree-one linear map $S^{>0}(V) \to V$, the projection of the
coderivation $D$ to the space $V$ of cogenerators of the coalgebra
$S(V)$.

\emph{Morphisms of pointed formal dg-manifolds} have to respect
differentials, \emph{i.e}., the corresponding coalgebra morphisms have
to respect the structure codifferentials.

Typically, a deformation functor is defined on the category of local
Artin rings. We find it more convenient to work with slightly more
general complete local rings (or algebras). Let $(R,\m)$ be a
\emph{complete local $k$-algebra}, that is to say, a local $k$-algebra
$R$ with a maximal ideal $\m$ such that the canonical ring
homomorphisms $k \to R/\m$ to the residue field and $R \to
\varprojlim_n R/\m^n$ to the completion of $R$ in the $\m$-adic
topology are isomorphisms. Given a dg-vector space $V$ and a complete
local algebra $(R,\m)$, we will be considering \emph{completed tensor
  products}, such as
\[
V \; \widehat{\otimes} \; \m := \varprojlim_n V \otimes \m/\m^n.
\]
We say that a complete local $k$-algebra $(R,\m)$ is \emph{of finite
  type} if all the quotients $R/\m^n$, $n \ge 1$, are
finite-dimensional over $k$. We will also associate a pointed formal
dg-manifold $\Spec R$ to a complete local $k$-algebra $(R,\m)$ of
finite type. This formal pointed manifold is determined by the natural
conilpotent coalgebra structure on the continuous linear dual $R^*$ of
$R$, along with the coaugmentation $k \to R^*$, dual to the
augmentation $R \to R/\m = k$. We will set the dg structure on $\Spec
R$ to be given by the zero codifferential on $R^*$. Note that this
\emph{pointed formal dg-manifold} is not linear, but rather
\emph{affine} in the scheme-theoretic sense.

More general pointed formal dg-manifolds are treated in
\cite{bashkirov-voronov3}.

\subsection*{Disclaimer}

Sometimes people refer to quantum deformations in the context of
deformations associated historically with quantum field theory,
especially when the deformation parameter is hidden within the
variable $q$. From the point of view of this paper, many such
deformations would still be classical. But you never know. For
example, a theorem of C.~Teleman \cite{teleman:semi-simple} states
that higher-genus Gromov-Witten invariants can be reconstructed from
the quantum cup product in the semisimple case. The quantum cup
product is a classical deformation of the usual cup product. We argue
that deformation theory of algebraic structures associated to higher
genera is intrinsically quantum, cf.\ Section
\ref{overture}. Likewise, quantum groups would be classical
deformations from our point of view. However, quantum groups are
closely related to Lie bialgebras, whose deformation theory should be
quantum, cf.\ Section~\ref{IBL-qd}.  Deformation quantization
\cite{kontsevich:DQ}, given its relation to moduli spaces of algebraic
curves, could have an incarnation within Quantum Deformation Theory,
but it is still a classical deformation and one does not need to evoke
moduli spaces of higher genera to do it.

\subsection*{Acknowledgments}

My deepest gratitude goes to my teacher Yuri Ivanovich Manin, whose
advice extended well beyond my Ph.D. thesis and affected all my
thoughts and writings ever since. I am also grateful to my faithful
coauthors Denis Bashkirov, Murray Gerstenhaber, Andrey Lazarev, Martin
Markl, and Jim Stasheff for inspiration and teaching me a few useful
things. I thank Ricardo Campos and Jim Stasheff in particular for
comments on earlier versions of the manuscript. My work was supported
by World Premier International Research Center Initiative (WPI), MEXT,
Japan, and a Collaboration grant from the Simons Foundation
(\#282349).

\section{Classical theory: MCE, CME \& deformation theory}

\subsection{The main player of deformation theory}

Let me start with the following, hopefully contentious, statement.

\begin{metatheorem}
\label{meta-cl}
Every reasonable deformation problem in mathematics comes from a
dg-Lie algebra.
\end{metatheorem}

The proof of this statement can easily be demonstrated by
contradiction: If there is a deformation problem that does not come
from a dg Lie algebra, the problem is obviously unreasonable. On a
more serious note, the metatheorem presents a long and important
development stemming from the work of Gerstenhaber
\cite{gerstenhaber}, Schlessinger-Stasheff \cite{s-s}, Goldman-Millson
\cite{g-m}, Deligne \cite{deligne}, Kontsevich \cite{kontsevich:DQ}
and a few others, who showed that, on the one hand, many major
deformation theories, such as those of complex manifolds or
associative algebras, come from a corresponding dg Lie algebra and, on
the other hand, can be completely described in terms of this dg Lie
algebra. For example, the dg Lie algebra governing deformations of a
complex manifold $M$ is the Dolbeault complex $(\Omega^{-1,\bullet}
(M), \bar{\del})$ of the holomorphic tangent bundle of $M$. The
bracket, known as the \emph{Nijenhuis bracket}, is given by the
commutator of vector fields combined with the wedge product of
$(0,q)$-forms. The dg Lie algebra describing deformations of an
associative algebra $A$ is its Hochschild complex $C^\bullet (A,A)$
along with the Gerstenhaber bracket. The metatheorem is probably not
that much a conclusion, but rather a manifesto: If your favorite
deformation theory does not come from a dg Lie algebra, you should
make an effort to find one. This would bring a chance to bear fruit
for your theory.

I may give ``plausible reasoning''to convince the reader that the
metatheorem should hold for philosophical reasons. I am sure this
argument is somewhat of folklore, but I had never thought about it
until Jim Stasheff forwarded to me a question of Samir Shah as to
\emph{really why} the gods of mathematics had designed the metatheorem
to be true. Here is what I think. Deformation theory describes the
tangent cone at a point $x$ of the moduli space $M$ of the
problem. The tangent cone is $\Spec \Gr \OO_{M,x}$, where the
associated graded $\Gr$ is taken with respect to powers of the maximal
ideal of the local ring $\OO_{M,x}$, the stalk of the structure sheaf
$\OO_M$ at $x$. Usually, this cone is singular. You may resolve this
singularity within derived algebraic geometry, for instance, find a
free dg-commutative algebra whose cohomology is $\Gr \OO_{M,x}$. A free
dg-commutative algebra is equivalent to an $\Li$-algebra,
\emph{cf}.\ a remark before Theorem \ref{Q} below. Then you take a
quasi-isomorphic dg-Lie algebra, and you are done.

\subsection{The Maurer-Cartan Equation \& deformation functor}

Given a dg Lie algebra $\g = \bigoplus_{n \in \nz} \g^n$ with a
differential $d$ of degree $\abs{d} = 1$, the \emph{Maurer-Cartan set}
\[
\MC_{\g} := \{ S \in \g^1 \; | \; dS + \frac{1}{2} [S,S] = 0 \}
\]
is the set of degree-one solutions $S$, called \emph{Maurer-Cartan
  elements}, of the \emph{Maurer-Cartan Equation $($MCE$\,)$}
\begin{equation}
\label{mce}
dS + \frac{1}{2} [S,S] = 0,
\end{equation}
also known as the \emph{CME}, the \emph{Classical Master Equation}.

A dg Lie algebra $\g$ defines a much richer object, called a
\emph{deformation functor}:
\begin{align*}
\CLAlg & \to  \Set,\\
(R,\m) & \mapsto  \MC_\g(R),
\end{align*}
where $\CLAlg$ is the \emph{category of complete local $k$-algebras of
  finite type} and $(R,\m)$ is an object of it, $\Set$ is the category of
sets, and
\begin{equation}
\label{mcf}
\MC_\g(R) := \{ S \in (\g \; \widehat{\otimes} \; \m)^1 \; | \; dS + \frac{1}{2} [S,S]
= 0 \}.
\end{equation}
This set is interpreted as the set of deformations over $\Spec R$ of
the mathematical object whose deformation theory is governed by
$\g$. For example, when $\g$ is the Hochschild complex of an
associative algebra $A$, the set $\MC_g(R)$ is the set of associative
$R$-linear multiplications on $A \; \widehat{\otimes} \; R$ extending
the original multiplication on $A$.

\begin{example}
Let $\h$ be a Lie algebra. Then the dg Lie algebra of based, graded
coderivations
\[
\g := \Coder_*(S(\h[1])) = \hom_k (S^{>0}(\h[1]), \h[1])
\]
of the cofree conilpotent cocommutative coalgebra $S(\h[1])$ describes
the deformation theory of the Lie algebra $\h$. The differential on
$\g$ is defined as follows:
\[
d :=
\begin{cases}
-[-,-]: S^2(\h[1]) \to \h[1], \text{ the Lie bracket on $\h$,} &
\text{for $n = 2$},\\
0 & \text{for all other $n$}.
\end{cases}
\]
The funny sign is a matter of convention, which becomes useful when
one generalizes this theory to the case when $\h$ is an
$\Li$-algebra. It is the matter of a straightforward checkup that the
condition that $d$ is a codifferential, $d^2 = 0$, is equivalent to
the Jacobi identity for the Lie bracket $[-,-]$. A \emph{deformation
  of $\h$ over a complete local algebra} $(R,\m)$ is, by definition, a
new bracket $[-,-]'$ on $\h \; \widehat{\otimes} \; R$ which reduces
to the original bracket $[-,-]$ on $\h$ modulo $\m$. Thus, we have
\[
[x,y]' = [x,y] + S(x,y) \qquad \text{for each $x,y \in \h$}
\]
and some $S(x,y) \in \Hom_k (\Lambda^2(\h),\h) \; \widehat{\otimes} \;
\m = \hom_k (S^{>0}(\h[1]), \h[1])^1 \; \widehat{\otimes} \; \m$ such
that $[-,-]'$ satisfies the Jacobi identity. As above, this new
bracket produces a new, deformed codifferential on $\g \;
\widehat{\otimes} \; R$:
\[
d' = d + [S,-],
\]
for which the equation $(d')^2 = 0$ is equivalent to the Jacobi
identity for $[-,-]'$. Now observe that $(d')^2 = dS +
\frac{1}{2}[S,S]$. This implies that a deformation of $\h$ over $R$ is
equivalent to the choice of a Maurer-Cartan element $S \in \MC_\g(R)$.
\end{example}

\subsection{Representability theorems}

Considering the opposite \emph{category $\FLAff := \CLAlg^{\op}$ of
  formal local affine $k$-schemes of finite type} whose object
corresponding to an algebra $R$ is denoted by $\Spec R$, we may turn
the deformation functor into a contravariant one:
\begin{align*}
\FLAff\,^{\op} & \to  \Set,\\
\Spec R & \mapsto  \MC_\g(R),
\end{align*}
and speak of its representability, possibly in a larger category.

Note that a dg Lie algebra $\g$ defines a pointed formal dg-manifold
$\g[1]$ determined by the symmetric coalgebra $S(\g[1])$ with the
codifferential induced by the linear map $l: S(\g[1]) \to \g[1]$ whose
restriction $l_n$ to $S^n(\g[1])$ is defined by the following formula:
\[
l_n :=
\begin{cases}
d: \g[1] \to \g[1], \text{ the differential on $\g$,} & \text{for
  $n = 1$},\\
\pm [-,-]: S^2(\g[1]) \to \g[1], \text{ the Lie bracket on $\g$,} &
\text{for $n = 2$},\\
0 & \text{for all other $n$}.
\end{cases}
\]
The sign for $n=2$ is given by $l_2(x,y) = (-1)^{\abs{x}} [x,y]$ for
$x, y \in \g[1]$.

\begin{remark}
It is also useful to recall at this point that the structure of a
pointed formal dg-manifold on the pointed formal graded manifold
$\g[1]$ associated to a graded vector space $\g$ is equivalent to the
structure of an $\Li$-\emph{algebra} on $\g$. Moreover, an
$\Li$-\emph{morphism} $\g' \to \g$ between two $\Li$-algebras is by
definition a morphism of pointed formal manifolds $\g'[1] \to \g[1]$,
which is, by definition, nothing but a morphism of coaugmented
dg-coalgebras $S(\g'[1]) \to S(\g[1])$. Since $S(\g[1])$ is cofree,
every such morphism is determined by a linear map $S^{>0}(\g'[1]) \to
\g[1]$, projection of the morphism to the cogenerating space
$\g[1]$. Thus, the \emph{category $\LiAlg$ of $\Li$-algebras with
  $\Li$-morphisms} becomes a full subcategory of the \emph{category
  $\PFDGMan$ of pointed formal dg-manifolds}, namely the full
subcategory of \emph{linear} pointed formal dg-manifolds.
\end{remark}

\begin{theorem}[Quillen, as per \cite{kontsevich-soibelman}]
\label{Q}
The deformation functor $\MC_\g$ is represented by the pointed formal
dg-manifold $\g[1]$, \emph{i.e}., there is a natural isomorphism
\[
\begin{split}
\MC_\g(R) & \xrightarrow{\sim} \Mor_{\PFDGMan} (\Spec R, \g[1])\\
   & := \Hom_{\CDGCAlg} ( (R^*, 0), (S(\g[1]), D)).
\end{split}
\]
\end{theorem}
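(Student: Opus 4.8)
The plan is to establish the isomorphism in two layers: first the purely graded statement coming from cofreeness of $S(\g[1])$, and then to cut it down by imposing compatibility with the codifferentials, which is exactly where the Maurer--Cartan Equation appears. Throughout I would read a point of the right-hand side as a morphism of coaugmented coalgebras $\phi\colon (R^*,0) \to (S(\g[1]),D)$ and keep track of the projection $\pi\colon S(\g[1]) \to \g[1]$ onto cogenerators.

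First I would unwind the right-hand side as a graded (non-dg) object. Since $S(\g[1])$ is cofree in the category of conilpotent cocommutative coalgebras, $\phi$ is determined by the linear map $f := \pi\circ\phi$, and compatibility with coaugmentations forces $f$ to vanish on the image of $k$ in $R^*$, so $f$ is a degree-zero map out of the coaugmentation coideal, which is the continuous dual $\m^*$. Using that $R$ is of finite type (each $\m/\m^n$ is finite-dimensional, hence reflexive), I would identify
\[
\Hom(\m^*, \g[1]) \;\cong\; (\g[1] \; \widehat{\otimes} \; \m)^0 \;=\; (\g \; \widehat{\otimes} \; \m)^1 ,
\]
so that $f$ corresponds to an element $S \in (\g \; \widehat{\otimes} \; \m)^1$. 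Conversely, conilpotency of $R^*$ (a consequence of $\m$-adic completeness) guarantees that the cofree coextension
\[
\phi \;=\; \sum_{n\ge 0} \tfrac{1}{n!}\, f^{\otimes n} \circ \bar\Delta^{(n-1)} ,
\]
with $\bar\Delta$ the reduced coproduct, is a well-defined coalgebra morphism. This yields a natural bijection between morphisms of coaugmented \emph{graded} coalgebras $R^* \to S(\g[1])$ and elements of $(\g \; \widehat{\otimes} \; \m)^1$, ignoring differentials.

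Next I would impose the dg condition. Because the codifferential on $R^*$ is $0$, the map $\phi$ respects codifferentials exactly when $D\circ\phi = 0$. The key technical input is that $D\circ\phi$ is a coderivation along $\phi$, and a coderivation into a cofree coalgebra is determined by its projection to cogenerators; hence $D\circ\phi = 0$ if and only if $\pi\circ D\circ\phi = 0$. Writing $\phi = \sum_n \phi_n$ with $\phi_n$ landing in $S^n(\g[1])$, and using $\pi\circ D|_{S^n(\g[1])} = l_n$ together with $l_n = 0$ for $n \neq 1,2$, I would compute
\[
\pi\circ D\circ\phi \;=\; l_1\circ\phi_1 + l_2\circ\phi_2 \;=\; d\circ f + \tfrac{1}{2}\, l_2\circ (f\otimes f)\circ\bar\Delta .
\]
Under the identification $f\leftrightarrow S$ the first term is $dS$, while the second, with the sign convention $l_2(x,y) = (-1)^{\abs{x}}[x,y]$ (so that $l_2(S,S)=[S,S]$ since $\abs{S}=0$ in $\g[1]$) and the reduced coproduct on $R^*$ dual to the multiplication $\m\otimes\m \to \m$, becomes $\tfrac{1}{2}[S,S]$: the $\m$-factors multiply in $R$ and the $\g$-factors bracket. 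Thus $\pi\circ D\circ\phi$ corresponds to $dS + \tfrac{1}{2}[S,S]$, whose vanishing is precisely the MCE \eqref{mce}, and the graded bijection restricts to the asserted $\MC_\g(R) \xrightarrow{\sim} \Mor_{\PFDGMan}(\Spec R, \g[1])$.

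Finally I would check naturality in $R$: a morphism $R \to R'$ in $\CLAlg$ induces $\m \to \m'$, hence a map $\g\;\widehat{\otimes}\;\m \to \g\;\widehat{\otimes}\;\m'$ compatible with the MCE, and dually a coalgebra map $(R')^* \to R^*$ giving precomposition on the $\Mor$ side; since completed tensor, continuous dualization, and cofree coextension are all functorial, the bijection commutes with these maps. The main obstacle I anticipate is not conceptual but bookkeeping: rigorously transporting the cofree-coextension formula and the projection-to-cogenerators lemma across the duality $R \leftrightarrow R^*$ and the completion $\widehat{\otimes}$, and in particular verifying that the degree-shift and $l_2$ signs assemble into the standard coefficient $\tfrac{1}{2}$ in front of $[S,S]$.
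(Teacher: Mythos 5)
Your proposal is correct and follows essentially the same route as the paper's proof: the cofree coextension $\sum_n \frac{1}{n!} f^{\otimes n}\circ\bar\Delta^{(n-1)}$ is exactly the convolution exponential $\exp(S)$ the paper uses, and your key step—that $D\circ\phi$ is a coderivation along $\phi$ into a cofree coalgebra and hence vanishes iff its projection $l_1(S)+\frac{1}{2}l_2(S,S)=dS+\frac{1}{2}[S,S]$ to cogenerators does—is the paper's argument verbatim. The added naturality check and sign bookkeeping are fine but not points of divergence.
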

Here $\Spec R$ is regarded as a pointed formal dg-manifold with a zero
differential, as in the Conventions section of the introduction,
$\Mor_{\PFDGMan}$ stands for the set of morphisms of pointed formal
dg-manifolds, and $\Hom_{\CDGCAlg}$ for the set of homomorphisms of
\emph{coaugmented differential graded coalgebras}.

\begin{remark}
  We do not consider solutions of the Maurer-Cartan equation \emph{up
    to homotopy}, or \emph{gauge equivalence} classes of solutions,
  here and in the sequel (for the Quantum Master Equation) for a
  reason. We can always extend the scalars and tensor the given dg-Lie
  or $\Li$-algebra $\g$ with the dg-algebra of polynomial differential
  forms on the $n$-simplex $\Delta^n$:
  $\g \otimes \Omega^\bullet (\Delta^n)$. If we do this for each
  $n \ge 0$, we will obtain a simplicial dg-Lie algebra. Solutions of
  the Maurer-Cartan equation in this simplicial dg-Lie algebra will
  form a deformation functor with values in simplicial sets, whose
  topology will reflect homotopy-theoretic properties of the
  deformation functor. For example, the set $\pi_0$ of its path
  components will be the functor of gauge equivalence classes of
  solutions. Thus, a mere extension of the deformation functor
  $\MC_\g$ to dg-commutative algebras will recover necessary
  homotopy-theoretic information.
\end{remark}

\begin{proof}
Ignore the differentials for the time being. Since $S(\g[1])$ is
cofree, homomorphisms $R^* \to S(\g[1])$ of conilpotent coaugmented
coalgebras are in a natural bijection with homogeneous $k$-linear maps
$\m^* \to \g[1]$, which are in bijection with the space $\g^1 \;
\widehat{\otimes} \; \m = (\g \; \widehat{\otimes} \; \m)^1$, because
of our finiteness assumption for $(R,\m)$. Explicitly, a coalgebra
homomorphism corresponding to an element $S \in (\g \;
\widehat{\otimes} \; \m)^1 = (\g[1] \; \widehat{\otimes} \; \m)^0 =
\Hom_k (\m^*, \g[1])$ is $\exp (S) \in \Hom_k (R^*, S(\g[1]))$, where
the exponential is taken in the sense of the convolution product on
the space of linear maps from a cocommutative coalgebra to a
commutative algebra.

Now recall that the homomorphism $\exp(S)$ must respect the
differentials. In this case, this means $D \circ \exp(S) =
0$. However, $D \circ \exp(S)$, being a coderivation of $R^*$ with
values in the cofree conilpotent coalgebra $S(\g[1])$ over the
homomorphism $\exp(S)$, is determined by its projection
\[
l_1(S) + \frac{1}{2!} l_2 (S,S) = dS + \frac{1}{2} [S,S]
\]
to the space $\g[1]$ of cogenerators.
\end{proof}

The unsettling discrepancy between the category on which the
Maurer-Cartan functor $\MC_\g$ is defined and the category in which it
is ``represented'' may nicely be resolved by the following tune-up.

\begin{theorem}[Chuang-Lazarev \cite{chuang-lazarev}]
\label{Chuang-Lazarev}
For any dg Lie algebra $\g$, the contravariant functor $\MC_\g$, as in
\eqref{mcf}, extended from $\FLAff$ to the category $\LiAlg$ of
$\Li$-algebras:
\begin{gather*}
\MC_\g: \LiAlg^{\op} \to \Set,\\
\MC_\g(\g') := \{ S \in \hom^1_k (S^{>0} (\g'[1]), \g) \; | \; DS +
\frac{1}{2} [S,S] = 0 \},
\end{gather*}
where $D$ is the standard differential on $\hom_k$ combining the
differentials on $\g$ and $S(\g'[1])$ and the bracket combines the
bracket on $\g$ with the coproduct on $S(\g'[1])$, is represented by
the dg Lie algebra $\g$ itself, regarded as an $\Li$-algebra. In other
words, there is a natural isomorphism
\[
\begin{split}
\MC_\g (\g') & \xrightarrow{\sim} \Mor_{\LiAlg} (\g', \g)\\
 & := \Mor_{\PFDGMan} (\g'[1], \g[1]) := \Hom_{\CDGCAlg} (S(\g'[1]), S(\g[1])).
\end{split}
\]
\end{theorem}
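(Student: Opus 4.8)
The plan is to follow the proof of Theorem~\ref{Q} essentially verbatim, replacing the source coalgebra $R^*$ (which carried the zero codifferential) by the coalgebra $S(\g'[1])$ equipped with the codifferential $D_{\g'}$ induced by the $\Li$-structure on $\g'$. First I would forget all differentials. Since $S(\g[1])$ is cofree as a conilpotent cocommutative coalgebra, every morphism $F\colon S(\g'[1]) \to S(\g[1])$ of coaugmented coalgebras is uniquely determined by its projection $f := \pi \circ F\colon S^{>0}(\g'[1]) \to \g[1]$ to the space of cogenerators, a degree-zero linear map, and $f$ ranges over all of $\hom^0_k(S^{>0}(\g'[1]), \g[1]) = \hom^1_k(S^{>0}(\g'[1]), \g)$. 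This identifies $f$ with an element $S$ of exactly the space in which the Maurer-Cartan equation lives, and the morphism recovered from $S$ is the convolution exponential $F = \exp(S)$, whose restriction to $S^n$ is $\frac{1}{n!}$ times the $n$-fold convolution power of $S$.

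Next I would reinstate the differentials. The morphism $F$ lies in $\Hom_{\CDGCAlg}$ precisely when it commutes with the two codifferentials, that is, when the degree-one map
\[
\Phi := D \circ F - F \circ D_{\g'}\colon S(\g'[1]) \to S(\g[1])
\]
vanishes, where $D$ is the codifferential on $S(\g[1])$ from Theorem~\ref{Q}. The key structural observation is that $\Phi$ is a coderivation \emph{over} $F$: both $D \circ F$ and $F \circ D_{\g'}$ satisfy the $F$-coderivation identity $\Delta \Phi = (\Phi \otimes F + F \otimes \Phi)\Delta$ (with the usual Koszul signs), the first because $D$ is a coderivation of the target and the second because $D_{\g'}$ is a coderivation of the source. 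Since a coderivation over $F$ valued in the cofree coalgebra $S(\g[1])$ is again determined by its projection to cogenerators, $F$ respects differentials if and only if $\pi \circ \Phi = 0$.

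It then remains to compute $\pi \circ \Phi$, and this is the step I expect to carry the real content and the delicate sign bookkeeping. On the target side, $\pi \circ D$ has only the two components $l_1 = d$ and $l_2$, the shifted bracket, of the dg-Lie structure on $\g$, so $\pi \circ D \circ F = l_1 \circ F_1 + l_2 \circ F_2$; since $F_1 = S$ and $F_2 = \frac{1}{2}(S \cdot S)$ is the convolution square, this yields $dS + \frac{1}{2}[S,S]$, where the quadratic term is exactly the convolution bracket pairing the coproduct of $S(\g'[1])$ against the bracket of $\g$. On the source side, $\pi \circ F \circ D_{\g'} = S \circ D_{\g'}$, which is precisely the term that, together with $dS$, assembles the full $\hom_k$-differential $DS$ combining the differential of $\g$ with the codifferential $D_{\g'}$ of $S(\g'[1])$; this is where the higher $\Li$-operations of $\g'$ enter. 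Up to the sign conventions fixed for the shift $\g[1]$ and for the $\hom_k$-differential, this gives $\pi \circ \Phi = DS + \frac{1}{2}[S,S]$, so $\Phi = 0$ is exactly the Maurer-Cartan equation defining $\MC_\g(\g')$. Naturality of the resulting bijection in $\g'$ is routine, as every construction above is functorial in the source coalgebra; and specializing to $\g' = \Spec R$, where $D_{\g'} = 0$, recovers Theorem~\ref{Q} and accounts for the absence of the $S \circ D_{\g'}$ term there.
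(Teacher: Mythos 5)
Your proposal is correct and is exactly the paper's approach: the paper's entire proof is the remark that the proof of Theorem~\ref{Q} ``works ditto,'' and what you have written is precisely that proof carried out with the source coalgebra $R^*$ (zero codifferential) replaced by $S(\g'[1])$ with its codifferential $D_{\g'}$, the only new ingredient being the extra term $F \circ D_{\g'}$, which you correctly absorb into the $\hom_k$-differential $D$ via the $F$-coderivation argument. No gaps; your write-up is simply a more detailed rendering of the paper's one-line proof.
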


\begin{proof}
The proof of Theorem~\ref{Q} works ditto in this case.
\end{proof}

\begin{remark}
This theorem admits even finer tuning, in which the dg Lie algebra
$\g$ is replaced with an $\Li$-algebra and the MCE \eqref{mce} is
replaced with an \emph{Extended Maurer-Cartan Equation $($EMCE$)$}:
\[
DS + \frac{1}{2!} [S,S] + \frac{1}{3!} [S,S,S] + \dots = 0,
\]
which may equivalently be written as
\[
\sum_{n=1}^\infty \frac{1}{n!} l_n (S, \dots, S) = 0,
\]
where $l_1 := D$ is the differential and $l_n := [-, \dots, -]$, $n
\ge 2$, are the higher, $\Li$ brackets on $\g$ (with ``scalars
extended'' to $\hom_k (S(\g'[1]), -)$). Everything else in the wording
of the theorem remains intact.
\end{remark}

\section{Quantum theory: QME \& quantum deformation theory}

\subsection{Overture}
\label{overture}

Let me start this section with a probably more contentious metatheorem
than the previous one.

\begin{metatheorem}
Every reasonable quantum deformation problem comes from a
$\BVi$-algebra.
\end{metatheorem}

The classical version, Metatheorem~\ref{meta-cl}, can be stated
equivalently in a similar form, as follows.

\begin{metatheorem-quoted}{$\mathbf{\ref{meta-cl}'}$}
Every reasonable classical deformation problem comes from an
$\Li$-algebra.
\end{metatheorem-quoted}

Indeed, on the one hand, any dg-Lie algebra is an $\Li$-algebra, and
on the other hand, if we have managed to construct an $\Li$-algebra
governing our deformation problem, then a quasi-isomorphic dg-Lie
algebra will describe this deformation problem equally well. A related
statement about quasi-isomorphic dg-BV-algebras and solutions of the
Quantum Master Equation \eqref{QME} below is proven by K.~Costello,
\cite[Section 5]{costello:tft}.

In the rest of the paper, I would like to provide evidence for the
quantum metatheorem. Before doing that, I need to define a few
things. Roughly speaking, under a \emph{quantum deformation problem} I
understand a deformation problem for a structure based on graphs
rather than trees or engaging higher genera rather than genus
zero. Those include structures of algebras over PROPs and modular
operads, rather than over operads and dioperads. Examples of such
could be Frobenius algebras, any types of bialgebras and their
homotopy versions, such as $\Li$-bialgebras and
$\IBLi$-algebras. Perhaps, deformations of stable maps in
Gromov-Witten theory may also be classified as quantum deformations. I
will return to examples later, after discussing the appropriate
setup. So, what is a $\BVi$-algebra?

\subsection{Differential graded BV- and $\BVi$-algebras}

As in the classical case, typical quantum deformation problems will be
coming from dg-BV-algebras, rather than $\BVi$ ones. Thus, let us
first discuss dg-BV-algebras.

\begin{definition}
\label{BV}
A \emph{dg-BV-algebra} is a dg-commutative associative algebra $(V,d)$
with a second-order differential $\Delta$. By a \emph{second-order
  differential} on a dg-com\-mu\-ta\-tive algebra $(V,d)$ we mean a
linear operator $\Delta: V \to V$ of degree $-1$, called a \emph{BV
  operator}, (graded) commuting with the differential $d$: $[\Delta,
  d] = 0$, annihilating the constants: $\Delta(1) = 0$, squaring to
zero: $\Delta^2 = 0$, and being a \emph{differential operator of
  second order}, which is a shortcut for \emph{order $\le 2$}:
\begin{equation*}
[[[\Delta, L_a],L_b],L_c] = 0 \qquad \text{for any $a,b,c \in V$,}
\end{equation*}
where $L_a: x \mapsto ax$ is the operator of left multiplication by
$a$ on $V$. For the purpose of this note, we will also assume a rather
nonstandard piece of structure, that of a conilpotent graded
cocommutative coalgebra on $V$. We will impose minimal compatibility
between the two structures, namely, that the unit of the algebra
structure is a coaugmentation of the coalgebra structure and that the
counit of the coalgebra structure is an augmentation of the algebra
structure. We will also assume that the differentials are compatible
with the augmentation homomorphism $V \to k$, where both $d$ and
$\Delta$ act trivially on $k$.
\end{definition}

\begin{remark}
Even though the traditional definition of a dg-BV-algebra does not
assume any coalgebra structure, imposing it is not unprecedented: it
was secretly used in \cite{cieliebak-latschev,
  cieliebak-fukaya-latschev} in the study of $\BVi$- and
$\IBLi$-morphisms. Our work \cite{mv} with Markl arose from our
discovering this secret and attempting to leak this information to the
public. The coalgebra requirement is rather mildly restrictive: every
augmented dg-commutative algebra carries a \emph{trivial} conilpotent
cocommutative comultiplication defined by $\delta(1) : = 1 \otimes 1$,
$\delta(a) := a \otimes 1 + 1 \otimes a$ for $a$ in the augmentation
ideal, see, \emph{e.g}., \cite{mv}. If we do not mention a specific
comultiplication in the sequel, we will assume the trivial
comultiplication.
\end{remark}

Note that the failure of $\Delta$ to be a derivation is measured by a
Lie bracket of degree $-1$, often called an \emph{antibracket}:
\begin{equation}
\label{der-bracket}
\begin{split}
\{a,b\} & := (-1)^{\abs{a}} (\Delta(ab) - (\Delta a) b - (-1)^{\abs{a}}
a (\Delta b))\\
& = (-1)^{\abs{a}} [[\Delta, L_a], L_b] (1) \qquad \text{for $a, b \in V$,}
\end{split}
\end{equation}
which turns $V$ into a dg-Gerstenhaber algebra.

\begin{example}[The Chevalley-Eilenberg complex of a dg-Lie algebra]
\label{CE}
Let $\g$ be a dg-Lie algebra. 
Then its \emph{Chevalley-Eilenberg $($CE$)$ complex} $C_{\bullet} (\g;
k) : = S(\g[-1])$ is a dg-BV-algebra. The differential $d$ is the
internal differential on $S(\g[-1])$, and the BV operator $\Delta$ is
the following part of the CE differential:
\[
\Delta (x_1 \dots x_n) := \sum_{i < j} (-1)^{\abs{x_1} + \dots +
  \abs{x_i} +\epsilon} x_1 \dots [x_i, x_j] \dots \widehat{x}_j \dots
x_n,
\]
where $x_1, \dots, x_n \in \g[1]$, $\abs{x}$ is the degree of $x$ in
$\g[-1]$, and $(-1)^\epsilon$ is the Koszul sign gotten from commuting
$x_1 \dots x_n$ to $x_1 \dots x_i x_j x_{i+1} \dots \widehat{x}_j
\dots x_n$ in $S(\g[-1])$. More generally, if $\g$ is an
$\Li$-algebra, then $S(\g[-1])$ with the CE differential becomes a
(commutative) $\BVi$-algebra, see
\cite{braun-lazarev,bashkirov-voronov1}.
\end{example}

\begin{example}[The Chevalley-Eilenberg complex of an involutive Lie
bialgebra, see \cite{sullivan-terilla-tradler, cieliebak-latschev,
  drummond-cole-terilla-tradler, cieliebak-fukaya-latschev}]
\label{IBL}
Let $\g$ be an \emph{involutive Lie bialgebra}, that is to say, a Lie
bialgebra $(\g, [-,-], \delta)$, $\delta: \g \to \g \wedge \g$ being
the cobracket, satisfying an \emph{involutivity condition}:
$ [-,-] \circ \delta = 0$.
Let $\Delta$ be the CE differential corresponding to the Lie algebra
structure on $\g$, as in the previous
example. 
Extend the cobracket $\delta$ as a degree-one derivation $d$ of
$S(\g[-1])$. Then $(S(\g[-1]), d, \Delta)$ is a dg-BV-algebra. Note
that without the involutivity condition, the BV operator $\Delta$ will
no longer commute with the differential $d$, but if we forget
$\Delta$, $S(\g[-1])$ will still be a dg-Gerstenhaber algebra, see
\cite{ks:exact}. See ibid.\ for a construction of a different BV
operator on the dg-Gerstenhaber algebra $S(\g[-1])$ in the case when
$\dim \g < \infty$ and the dual $\g^*$ carries a Lie-bialgebra
structure which is triangular, rather than involutive.
\end{example}

\begin{example}[The Chevalley-Eilenberg complex of a bi-dg-Lie algebra]
\label{bi-dg}
This example is very important for quantum deformation theory, see
Examples \ref{twisted-End} and \ref{twisted-O}. Let $\g$ be a graded
Lie algebra with two commuting differentials: $d$ of degree 1 and
$\Delta$ of degree $-1$. We may call such $\g$ a \emph{bi-dg-Lie
  algebra}. Consider the graded symmetric algebra $S(\g[-1])$ on the
shifted graded vector space $\g[-1]$. Extend the differential $d$ to
$S(\g[-1])$ as an ``internal'' differential with respect to
multiplication. Extend the Lie bracket $[-,-]$ as a degree $-1$
biderivation to an antibracket $\{-,-\}$ on $S(\g[-1])$, known as the
\emph{Schouten bracket}. Then extend $\Delta$ to a second-order
differential operator on $S(\g[-1])$ by the formula
\begin{equation*}
\Delta(ab) = (\Delta a) b + (-1)^{\abs{a}} a (\Delta b) +
(-1)^{\abs{a}} \{a,b\} \qquad \text{for $a, b \in S(\g[-1])$.}
\end{equation*}
The resulting triple $(S(\g[-1]), d, \Delta)$ is a dg-BV-algebra. By
the way, for every dg-BV-algebra $V$, the shifted space $V[1]$ carries
a bi-dg-Lie algebra structure with respect to the
antibracket. Moreover, for the dg-BV-algebra $S(\g[-1])$ of this
example, the natural inclusion $\g = S^1 (\g[-1])[1] \hookrightarrow
S(\g[-1])[1]$ is a morphism of bi-dg-Lie algebras.
\end{example}

\begin{example}[The bar complex of an associative algebra, see
Terilla-Tradler-Wilson \cite{ttw}]
\label{TTW}

Let $A$ be a dg-associative algebra and $T(A[-1])$ be the dg-tensor
coalgebra on the shifted dg-vector space $A[-1]$. Then $T(A[-1])$ with
the shuffle product and the BV operator
\[
\Delta (a_1 \otimes \dots \otimes a_n) := \sum_{i} (-1)^{\abs{a_1} +
  \dots + \abs{a_i}} a_1 \otimes \dots \otimes (a_i \cdot a_{i+1})
\otimes \dots \otimes a_n
\]
for $a_i \in A[-1]$, becomes a dg-BV-algebra. Note that we need to
choose a conilpotent cocommutative coproduct on $T(A[-1])$, such as
the shuffle coproduct, or the trivial coproduct described in the
remark after Definition \ref{BV}, to fit our definition of a
dg-BV-algebra. I suspect that $T(A[-1])$, perhaps with the original
coassociative coproduct, is responsible for quantum deformation theory
over noncommutative, associative parameter rings.
\end{example}

\begin{example}[The bar complex of an $\OO$-algebra]
Let $\OO$ be a Koszul quadratic operad of vector spaces, $\OO^!$ be
its Koszul-dual operad, and $V$ be a dg-$\OO$-algebra. Then the cofree
conilpotent $\OO^!$-coalgebra $F^c_{\OO^!} (V[1])$ acquires a
codifferential $d + \Delta$, where $d$ is the internal differential
and $\Delta$ is the coderivation corresponding to the $\OO$-algebra
structure on $V$. Based on the particular cases of $\OO$ being the Lie
operad or the associative operad, as in Examples \ref{CE} and
\ref{TTW}, respectively, I anticipate that $F^c_{\OO^!} (V[-1])$ with
the differential $d$ and the BV operator $\Delta$ will be a dg-BV
algebra and give rise to deformation theory with $\OO^!$-algebras as
parameter rings. The 2018 honors thesis \cite{lucy} of Lucy Yang at
the University of Minnesota aims to prove that $\Delta$ is indeed a BV
operator.
\end{example}

\begin{example}[Functions on an odd symplectic manifold, see Schwarz
\cite{schwarz,getzler:BV}]
\label{schwarz}

Let $M$ be an odd symplectic supermanifold of dimension $(n|n)$ with a
volume form. Then the algebra $C^\infty(M)$ of smooth functions on $M$
becomes a BV-algebra with the BV operator given by
\[
\Delta := \sum_i \frac{\del^2}{\del x_i \del \xi_i}
\]
in super Darboux coordinates $(x_1, \dots , x_n | \; \xi_1, \dots,
\xi_n)$. Here we do not assume much of a dg structure, \emph{i.e}.,
the grading is actually a $\nz/2\nz$-grading and the differential $d$
is just zero. Also, we do not assume any comultiplication in this
example. However, to get one, one can choose a basepoint on $M$ and
use the associated augmentation on $C^\infty(M)$ to define a trivial
coproduct, as in the Remark after Definition \ref{BV}. Still, in
general there will be no compatibility between the augmentation and
$\Delta$, which we require of our dg-BV-algebras. Thus, this example,
albeit fundamental, is an outlier in our context.

A particular case of this example, which I present in the graded,
rather than $\nz/2\nz$-graded version, is the example of $T^*[1]M$, a
shifted cotangent bundle to an oriented $n$-manifold $M$, see
\cite{campos}. Functions on this graded manifold are nothing but
multivector fields $\Gamma(M, S(T[-1]M))$. The volume form on $M$
gives an isomorphism: $f: \Gamma (M, S^p (T[-1]M)) \to \Omega^{n-p}
(M)$. Then $\Delta = f^{-1} \circ d_{\operatorname{dR}} \circ f$
defines the structure of a BV-algebra on $\Gamma(M, S(T[-1]M))$,
\emph{i.e}., a dg-BV-algebra with a zero differential.
\end{example}

\begin{definition}
A \emph{$($commutative$)$ $\BVi$-algebra} is a graded commutative
associative algebra $V$ with a sequence of differential operators
$\Delta_n$ of order $\le n$ with $n \ge 1$. A \emph{differential
  operator of order $\le n$} on a graded commutative algebra $V$ is a
linear operator $\Delta_n: V \to V$ satisfying
\begin{equation*}
[\dots [[\Delta_n, L_{v_0}],L_{v_1}], \dots, L_{v_n}] = 0 \qquad
\text{for any $v_0, v_1, \dots, v_n \in V$.}
\end{equation*}
We also require that the $k[[\hbar]]$-linear operator $\dhat := \sum_{n
  \ge 1} \hbar^{n-1} \Delta_n: V[[\hbar]] \to V[[\hbar]]$, which we
call a $\BVi$ \emph{operator}, where $\hbar$ is a formal variable with
$\abs{\hbar} = 2$, be of total degree $1$, kill constants: $\dhat (1) =
0$, and square to zero: $\dhat^2 = 0$. Another requirement, specific to
this paper, is that $V$ has a a conilpotent graded cocommutative
coalgebra structure, mildly compatible with the algebra structure, as
in Definition \ref{BV}: the unit of the algebra structure is a
coaugmentation of the coalgebra structure, and the counit of the
coalgebra structure is an augmentation of the algebra structure. And
we again assume that $\dhat$ is compatible with the augmentation
homomorphism $V[[\hbar]] \to k[[\hbar]]$, where $\dhat$ acts trivially
on $k[[\hbar]]$.
\end{definition}

\begin{example}
Every dg-BV-algebra $(V,d,\Delta)$ is a $\BVi$-algebra with
$\dhat = d + \hbar \Delta$ or $\Delta_1 = d$, $\Delta_2 =
\Delta$, and $\Delta_n = 0$ for all other $n$.
\end{example}

The $\BVi$ operator $\dhat$ generates a whole family of ``derived''
antibrackets
\begin{equation*}
\begin{split}
\{v_1, v_2, \dots, v_n\} & := \frac{1}{\hbar^{n-1}} [\dots [[\dhat,
      L_{v_1}],L_{v_2}], \dots, L_{v_n}] (1)\\ & = [\dots [[\Delta_n
      +\hbar \Delta_{n+1} + \hbar^2 \Delta_{n+2} + \dots, L_{v_1}],L_{v_2}], \dots, L_{v_n}]
(1)
\end{split}
\end{equation*}
for $v_1, v_2, \dots, v_n \in V$. Note a conventional sign difference
with \eqref{der-bracket} for $n=2$. These antibrackets (after being
multiplied back by $\hbar^{n-1}$, to be precise) define the structure
of an $\Li$-algebra on $V$, compatible with the product on $V$ in the
way that the failure of the $n$th antibracket to be a multiderivation
is measured by the $(n+1)$st antibracket. These statements are an
original result of J.~Alfaro, I.~A. Batalin, K.~Bering, P.~H. Damgaard
and R.~Marnelius
\cite{alfaro-damgaard,batalin-bering-damgaard,batalin-marnelius:q},
see also F.~Akman \cite{akman:bv,akman:master}, Th.~Th. Voronov
\cite{tvoronov:hdb,tvoronov:hdba}, and D.~Bashkirov and the author
\cite{bashkirov-voronov1}.

Commutative $\BVi$-algebras appeared in \cite{cieliebak-latschev} in
the study of Symplectic Field Theory. We will be dropping the
adjective ``commutative,'' despite the fact that our commutative
$\BVi$-algebras do not fit the definition of an $\OO_\infty$-algebra in
the sense of being an algebra over a cofibrant model $\OO_\infty$ of an
operad $\OO$. The correct, $C_\infty$ version of the notion of a
$\BVi$-algebra and its relation to the notion of a commutative
$\BVi$-algebra is described in \cite{galvez-carrillo-tonks-vallette}.

\subsection{QME \& quantum deformation functor}

Again, let $\hbar$ be a formal variable of degree 2:
\[
\abs{\hbar} = 2.
\]
First of all, a dg-BV-algebra $V$ will be related to quantum
deformations through the corresponding \emph{quantum deformation
  functor}
\begin{align*}
\CLAlg & \to  \Set,\\
(R,\m) & \mapsto  \QM_V (R),
\end{align*}
which associates to a complete local algebra $(R,\m)$ the set
\[
\QM_V (R) := \{ S \in V[[\hbar]]^2 \, \widehat{\otimes}\; \m \; | \;
dS + \hbar \Delta S + \frac{1}{2} \{S,S\} = 0 \}
\]
of solutions of the \emph{Quantum Master Equation $(\!$QME$\,)$}:
\begin{equation}
\label{QME}
dS + \hbar \Delta S + \frac{1}{2} \{S,S\} = 0,
\end{equation}
which is equivalent to
\[
\widehat d\, e^{S/\hbar} = 0,
\]
where
\[
\dhat := d + \hbar \Delta,
\]
in the space $V((\hbar)) \; \widehat{\otimes} \; R$ of formal
Laurent series, because of the following remarkable formula
\begin{equation}
\label{big-formula}
e^{-S/\hbar} \circ \widehat d \circ e^{S/\hbar} = \widehat d + \{S,
-\} + \frac{1}{\hbar} \left( \widehat d S + \frac{1}{2}
\{S,S\}\right),
\end{equation}
for operators on $V((\hbar)) \; \widehat{\otimes} \; R$, where we
abuse notation by writing elements, such as $e^{S/\hbar}$, in lieu of
the operators, such as $L_{e^{S/\hbar}}$, of left multiplication by
these elements. This formula follows from a celebrated, and much more
compact, identity
\[
\Ad_{e^A} = e^{\ad_A}
\]
for linear operators on a Lie algebra, in our case evaluated on
$\widehat d$ with $A = L_{-S/\hbar}$. If we apply the operators on
both sides of \eqref{big-formula} to the unit element 1, we obtain an
equation
\[
e^{-S/\hbar} \widehat d \, (e^{S/\hbar}) = \frac{1}{\hbar}
\left(\widehat d S + \frac{1}{2} \{S,S\} \right)
\]
on elements in $V((\hbar)) \; \widehat{\otimes} \; R$, which yields
the equivalence of the two forms of the QME above.

The quantum deformation functor admits a generalization to a
$\BVi$-algebra $(V, \dhat)$. Here are the adjustments one needs to make
in this case. The quantum deformation functor $\QM_V: \CLAlg \to \Set$
associates to a complete local algebra $(R,\m)$ the set
\begin{equation}
\label{QM}
\QM_V (R) := \{ S \in V[[\hbar]]^2 \, \widehat{\otimes}\; \m \; | \;
\dhat \, e^{S/\hbar} = 0 \}
\end{equation}
of solutions of the \emph{Quantum Master Equation $(\!$QME$\,)$}:
\begin{equation}
\label{QMEi}
\dhat \, e^{S/\hbar} = 0,
\end{equation}
which is equivalent to
\[
\dhat S + \frac{1}{2!} \{S,S\} + \frac{1}{3!} \{S,S,S\} + \dots = 0.
\]
Again, the equivalence follows from a generalization of
\eqref{big-formula}:
\begin{multline*}
e^{-S/\hbar} \circ \dhat \circ e^{S/\hbar}
= \dhat + \{S, -\} + \frac{1}{2!}\{S, S, -\} + \dots\\
+ \frac{1}{\hbar} \left( \dhat S + \frac{1}{2!} \{S,S\} + \frac{1}{3!}
\{S,S,S\} + \dots \right),
\end{multline*}
proven exactly in the same way as in the dg-BV case.

Solutions to the QME in a dg-BV- or $\BVi$-algebra, in general, may be
considered as distinguished deformations of the BV or $\BVi$
operator. In particular cases, they may describe interesting
structures. For example, for the dg-BV-algebra $\Gamma(M, S(T[-1]M))$
of multivector fields on an oriented manifold $M$, see the end of
Example \ref{schwarz}, a solution of the QME is a linear polynomial
$S_0 + \hbar S_1$, where $S_0$ is a bivector field and $S_1$ is just a
function satisfying the relations $[S_0, S_0] = 0$ and $\Delta(S_0) +
[S_1, S_0] = 0$. As Campos pointed out to me, these data define a
unimodular Poisson structure on $M$, see \cite{willwacher-calaque}.

\subsection{Quantum representability theorem}

To talk about representable functors in the QME context, we need to
switch to different categories, those of $\BVi$-spaces and
$\BVi$-algebras.

\begin{definition}
\label{BVi}
Let $(V, \dhat)$ and $(V', \dhat')$ be two $\BVi$-algebras. A
\emph{$\BVi$-morphism} $V \to V'$ is a $k$-linear map $\varphi: V \to
V'[[\hbar]]$ of degree two such that
\begin{enumerate}
\item $\varphi(1) = 0$;
\item $\dhat' \circ \exp(\varphi/\hbar) = \exp (\varphi/\hbar) \circ
  \dhat$, where the exponential $\exp$ is taken with respect to the
  convolution product on $\Hom_{k}(V, V'((\hbar)))$;
\item $\varphi = \varphi_0 + \hbar \varphi_1 + \hbar^2 \varphi_2 +
  \dots$, where $\varphi_n: V \to V'$ is a \emph{differential operator
    of order $\le n+1$ over the trivial algebra homomorphism} $V \to
  V'$, which takes the augmentation ideal $\m$ of $V$ to zero,
  \emph{i.e}., $\varphi_n (\m^{n+2}) = 0$.
\end{enumerate}
\end{definition}

This definition is somewhat more general than the original one by
Cieliebak and Latschev \cite{cieliebak-latschev} (or Cieliebak,
Fukaya, and Latschev \cite{cieliebak-fukaya-latschev}): if we require
our $\varphi_n$ to be a differential operator of order $\le n$ over
the trivial algebra homomorphism for each $n \ge 0$, then
$\varphi/\hbar$ will be a $\BVi$-morphism in their sense. The
exponential makes sense, because of Condition (1) and the conilpotency
of the coproduct on $V$. Composition of $\BVi$-morphisms $\varphi$ and
$\psi$ is done by composing $\exp(\varphi/\hbar)$ and
$\exp(\psi/\hbar)$. The fact that composition of exponentials is the
exponential of a $\BVi$-morphism follows from the existence of the
logarithm and its extension to Laurent series in $\hbar$, see
\cite{bashkirov-voronov1,mv}, and checking that the series $\hbar \log
(\exp(\varphi/\hbar) \circ \exp(\psi/\hbar))$ satisfies Properties
(1)-(3) of Definition \ref{BVi}. For example, to see that the series
does not contain any negative powers of $\hbar$, one verifies that
$\lim_{\hbar \to 0} \hbar \log (\exp(\varphi/\hbar) \circ
\exp(\psi/\hbar))$ is finite.

Let $\BViAlg$ denote the category of $\BVi$-algebras and $\BViSp$ the
same category, interpreted geometrically: if $(V, \dhat)$ is a
$\BVi$-algebra, $\Spec V^*$ will denote the corresponding geometric
object, which we call a $\BVi$-\emph{space}. The idea is that this is
a geometric object, generalized functions, or distributions, on which
form the $\BVi$-algebra $V$.

Observe that the opposite category of complete local algebras forms a
subcategory of the category of $\BVi$-algebras:
\[
\CLAlg \subseteq \BViAlg^{\op}.
\]
To see this, observe that if $(R,\m)$ is a complete local $k$-algebra,
then its $k$-linear dual $R^*$ with the $\BVi$ operator $\dhat = 0$ and
multiplication defined to be zero on $\m^*$ is a $\BVi$-algebra. A
homomorphism $f: (R, \m_R) \to (S, \m_S)$ of complete local algebras
induces a dual morphism $f^*: S^* \to R^*$ of coalgebras. Then
$\varphi := \hbar \log f^*$ is a $\BVi$-morphism $S^* \to
R^*$. Indeed, let $\sfe$ be the unit of $\Hom_k (S^*, R^*)$ under the
convolution product. It is given by composing the unit morphism $k \to
R^*$ with the counit morphism $S^* \to k$. Then $\varphi = \hbar \log
f^* = \hbar(f^* - \sfe)$, because $\Ima (f^* - \sfe) \subseteq \m^*_R$
and $(\m^*_R)^2 = 0$. Therefore, $\varphi(1) = \hbar(1 - 1) = 0$. Also
$\log f^* = f^* - \sfe$ automatically vanishes on $(\m_S^*)^3 =
0$. Thus, it makes sense to talk about a functor $\CLAlg \to \Set$,
such as $\QM_V$, being represented by an object of the category
$\BViAlg$ or $\BViSp$.

\begin{theorem}
\label{first}
The quantum deformation functor $\QM_V$ associated to a dg-BV- or a
$\BVi$-algebra $V$ is represented by the $\BVi$-space $\Spec V^*$,
\emph{i.e}., there is a natural isomorphism
\[
\begin{split}
\QM_V (R) & \xrightarrow{\sim} \Mor_{\BViSp} (\Spec R, \Spec
V^*)\\ & := \Hom_{\BViAlg} ( (R^*, 0), (V, \dhat)).
\end{split}
\]
\end{theorem}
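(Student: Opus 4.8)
\section*{Proof proposal}

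The plan is to follow the proof of Theorem~\ref{Q} almost verbatim, with the cofree coalgebra $S(\g[1])$ and its coderivations replaced by the $\BVi$-algebra $(V,\dhat)$, and the convolution exponential of coalgebra maps replaced by the convolution exponential of Definition~\ref{BVi}. As there, I would first forget the operators $\dhat$ on $V$ and $\dhat_{R^*}=0$ on $R^*$ and match the underlying data on both sides, then reinstate the operators and read off the master equation. The whole argument thus splits into an ``underlying bijection'' step and a ``differentials'' step, exactly as in the classical case.

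First I would unravel what a $\BVi$-morphism $\varphi\colon (R^*,0)\to (V,\dhat)$ is when the source is the $\BVi$-algebra $R^*=k\epsilon\oplus\m^*$ equipped, as prescribed for objects of $\CLAlg\subseteq\BViAlg^{\op}$, with the multiplication that is zero on $\m^*$. Because that product vanishes on $\m^*$ one has $(\m^*)^2=0$, hence $(\m^*)^{n+2}=0$ for every $n\ge 0$; therefore Condition~(3) of Definition~\ref{BVi}, that each $\varphi_n$ be a differential operator of order $\le n+1$ over the trivial homomorphism killing $(\m^*)^{n+2}$, is automatically satisfied and imposes nothing beyond $k$-linearity. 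Together with Condition~(1), $\varphi(1)=\varphi(\epsilon)=0$, this shows that the underlying datum of $\varphi$ is merely a degree-two linear map $\m^*\to V[[\hbar]]$. By the finite-type hypothesis on $R$, and exactly as in Theorem~\ref{Q}, such maps are in natural bijection with $V[[\hbar]]^2\,\widehat{\otimes}\,\m$, i.e.\ with the elements $S$ entering $\QM_V(R)$ before the equation is imposed; the bijection sends $\varphi$ to $S:=\varphi|_{\m^*}$.

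The crux is then to identify the operator $\exp(\varphi/\hbar)$ of Condition~(2) with the element $e^{S/\hbar}$ of the QME. I would record the elementary but essential fact that, under the finite-type identification $\Hom_k(R^*,V((\hbar)))\cong V((\hbar))\,\widehat{\otimes}\,R$, the convolution product used in Definition~\ref{BVi} (built from the coproduct on $R^*$ and the product on $V$) corresponds to the ordinary product on $V((\hbar))\,\widehat{\otimes}\,R$; this is just the statement that the coproduct on $R^*$ is dual to the multiplication of $R$. Consequently the convolution exponential $\exp(\varphi/\hbar)$ coincides with the naive exponential $e^{S/\hbar}$ computed in the commutative algebra $V((\hbar))\,\widehat{\otimes}\,R$, since $\varphi/\hbar$ corresponds to $S/\hbar\in V((\hbar))\,\widehat{\otimes}\,\m$. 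Under the same identification, post-composition with $\dhat$ corresponds to applying $\dhat$ to the $V$-factor, so Condition~(2), which reads $\dhat\circ\exp(\varphi/\hbar)=\exp(\varphi/\hbar)\circ\dhat_{R^*}=0$ because $\dhat_{R^*}=0$, becomes exactly $\dhat\,e^{S/\hbar}=0$, the defining equation of $\QM_V(R)$. This yields the asserted bijection $\QM_V(R)\xrightarrow{\sim}\Hom_{\BViAlg}((R^*,0),(V,\dhat))$.

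It remains to check naturality in $R$. A morphism $f\colon(R,\m_R)\to(R',\m_{R'})$ of complete local algebras induces the $\BVi$-morphism $\hbar\log f^*\colon (R'^*,0)\to(R^*,0)$ described before the theorem, whose exponential is $f^*$; composing the representing $\BVi$-morphisms with it implements the pullback on the right-hand side, while on the $\QM$ side $f$ induces the evident map $V[[\hbar]]^2\,\widehat{\otimes}\,\m_R\to V[[\hbar]]^2\,\widehat{\otimes}\,\m_{R'}$. That these agree is again a direct consequence of the product/coproduct duality above. The main obstacle, such as it is, lies entirely in this bookkeeping: verifying cleanly that the convolution exponential of the abstract definition is the very exponential $e^{S/\hbar}$ entering the QME, and that the degenerate multiplication on $R^*$ renders the higher differential-operator conditions of Definition~\ref{BVi} vacuous. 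Once these two points are in hand, Condition~(2) unwinds to the Quantum Master Equation with no further work.
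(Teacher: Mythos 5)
Your proposal is correct and follows essentially the same route as the paper's (much terser) proof: identify a $\BVi$-morphism $(R^*,0)\to(V,\dhat)$ with a degree-two map $\m^*\to V[[\hbar]]$, note that Condition (3) is vacuous since $(\m^*)^{n+2}=0$, and read Condition (2) as the QME $\dhat\, e^{S/\hbar}=0$. You merely make explicit two points the paper leaves implicit — that the convolution exponential matches $e^{S/\hbar}$ under the duality between the coproduct on $R^*$ and the product on $R$, and the naturality in $R$ — which is fine.
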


Park, Terilla, and Tradler in \cite{park-terilla-tradler} prove a
representability theorem of a rather different flavor for the quantum
deformation functor up to gauge equivalence. Our result is closer to
but does not directly follow from M\"unster-Sachs \cite[Section
  4.3]{muenster-sachs} or Markl-V \cite[Corollary 41]{mv}. However,
the proof, which we repeat here for completeness, is similar.

\begin{proof}
A solution $S \in V[[\hbar]]^2 \, \widehat{\otimes}\; \m$ of the QME
\eqref{QMEi} is by definition equivalent to a degree-two $k$-linear
map $S: \m^* \to V[[\hbar]]$ satisfying $\dhat \exp (S/\hbar) = 0$ or a
degree-two $k$-linear map $S: R^* \to V[[\hbar]]$ such that $S(1) =
0$. Each such $S$ automatically satisfies Property (3) of Definition
\ref{BVi}, because $(\m^*)^{n+2} = 0$ for all $n \ge 0$.
\end{proof}

A quantum analogue of Chuang-Lazarev's Theorem \ref{Chuang-Lazarev}
has a more natural wording and, naturally, a totally trivial
proof. The quantum deformation functor associated to a $\BVi$-algebra
$(V,\dhat)$ may be extended to a functor
\[
\QM_V: \BViAlg^{\op} \to \Set
\]
which takes a $\BVi$-algebra $(V', \dhat')$ to the set of
$\BVi$-morphisms $V' \to V$. One may view the equation $\dhat
\exp(\varphi/\hbar) = \exp(\varphi/\hbar) \dhat'$ as a QME on
$\varphi$. Then, tautologically, the functor $\QM_V$ is represented by
the $\BVi$-algebra $(V,\dhat)$ or the $\BVi$-space $\Spec V^*$.

On the other hand, the following less general representability theorem
may be more interesting.

Before wording the theorem, note that the category $\LiAlg$ of
$\Li$-algebras (and thereby the equivalent category $\PFDGMan$ of
pointed formal dg-manifolds) is a subcategory of the category
$\BViAlg$ of $\BVi$-algebras. Indeed, if $\g$ is an $\Li$-algebra,
then $S(\g[-1])$ is a $\BVi$-algebra, see Example \ref{CE}. An
$\Li$-morphism $\g \to \h$ is, by definition, a morphism of
coaugmented dg-coalgebras $S(\g[1]) \to S(\h[1])$, which is determined
by its components $\varphi_n: S^n(\g[1]) \to \h[1]$, $n \ge 1$, and $\varphi
= \sum_{n \ge 1} \hbar^n \varphi_n$ defines a $\BVi$-morphism $S(\g[-1])
\to S(\h[-1])$, see \cite[Theorem 4.8]{bashkirov-voronov1}.

Now define a version of the quantum deformation functor on the
opposite category $\LiAlg^{\op}$ of the category of
$\Li$-algebras. Let $(V, \dhat_V)$ be a $\BVi$-algebra and $\g$ an
$\Li$-algebra. Then $S(\g[1])$ is a coaugmented conilpotent
cocommutative dg-coalgebra with the codifferential $D_1 + D_2 + \dots$
defining the $\Li$ structure on $\g$: $D_n$ extends the $n$th bracket
$l_n: S^n(\g[1])\to \g[1]$ to a degree-one coderivation of
$S(\g[1])$. Likewise, $S(\g[-1])$ is a coaugmented conilpotent
cocommutative graded coalgebra with the codifferential $\dhat_\g := D_1
+\hbar D_2 + \hbar^2 D_3 + \dots$ on $S(\g[-1])[[\hbar]]$. Hence, the
graded vector space $\hom_k (S(\g[-1]), V)$ becomes a $\BVi$-algebra
with respect to the convolution product and the $\BVi$ operator $\Dh
(\Phi) := \dhat_V \circ \Phi - (-1)^{\abs{\Phi}}\Phi \circ
\dhat_\g$.\footnote{For $\Dh$ to define a $\BVi$ operator, it is
  essential that $\dhat_\g$ be a coderivation.} Thus, we can define the
value of the quantum deformation functor associated to $V$ on the
$\Li$-algebra $\g$ as the set 
\begin{equation}
\label{QMV}
\QM_V(\g) := \{ S = \sum_{n \ge 0} \hbar^n S_n \; | \; \Dh \,
e^{S/\hbar} = 0 \}
\end{equation}
of solutions to the \emph{QME for the $\BVi$-algebra} $\hom_k
(S(\g[-1]), V)$, where
\[
S_n \in \hom^{2-2n}_k (S^{>0}(\g[-1]), V)  \qquad
\text{for each $n \ge 0$}
\]
subject to 
\[
S_n (S^{>n+1}(\g[-1])) = 0 \qquad \text{for each $n \ge 0$}.
\]

\begin{theorem}
\label{second}
Given a $\BVi$-algebra $(V,\dhat_V)$, the associated quantum deformation
functor
\begin{equation*}
\QM_V: \LiAlg^{\op} \to \Set
\end{equation*}
is represented by the $\BVi$-algebra $V$ in the category of
$\BVi$-algebras or by the $\BVi$-space $\Spec V^*$ in the equivalent
category of $\BVi$-spaces. In other words, there is a natural
isomorphism
\[
\begin{split}
\QM_V (\g) & \xrightarrow{\sim} \Mor_{\BViSp} (\Spec S(\g[-1])^*, \Spec
V^*)\\ & := \Hom_{\BViAlg} ( (S(\g[-1]), \dhat_g), (V, \dhat_V)).
\end{split}
\]
\end{theorem}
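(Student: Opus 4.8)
The plan is to prove the theorem by unwinding both sides until they become the same combinatorial datum. On the right-hand side, an element of $\Hom_{\BViAlg}((S(\g[-1]), \dhat_\g), (V, \dhat_V))$ is, by Definition \ref{BVi}, a degree-two $k$-linear map $\varphi: S(\g[-1]) \to V[[\hbar]]$ subject to Conditions (1)--(3). On the left-hand side, an element of $\QM_V(\g)$ is a series $S = \sum_{n \ge 0} \hbar^n S_n$ with the prescribed degrees and supports solving the QME $\Dh\, e^{S/\hbar} = 0$ of \eqref{QMV}. I would take the map $S \mapsto \varphi := S$, which is the identity on underlying linear maps, and check that the two sets of conditions coincide term by term; the identification with $\Mor_{\BViSp}(\Spec S(\g[-1])^*, \Spec V^*)$ is then the definition of morphisms in $\BViSp$, and naturality is addressed at the end.

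First I would identify the QME with morphism Condition (2). The product underlying the $\BVi$-algebra $\hom_k(S(\g[-1]), V)$ is the convolution product, so the exponential $e^{S/\hbar}$ appearing in \eqref{QMV} is exactly the convolution exponential $\exp(S/\hbar)$ of Definition \ref{BVi}(2); write $\Phi := e^{S/\hbar}$, well-defined by Condition (1) and the conilpotency of the coproduct on $S(\g[-1])$. Since $S$ has total degree $2$ and $\abs{\hbar} = 2$, the exponent $S/\hbar$, and hence $\Phi$, have degree $0$, so the definition of the $\BVi$ operator on $\hom_k(S(\g[-1]), V)$ gives $\Dh \Phi = \dhat_V \circ \Phi - \Phi \circ \dhat_\g$. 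Thus the QME $\Dh\, e^{S/\hbar} = 0$ is literally the equation $\dhat_V \circ \exp(S/\hbar) = \exp(S/\hbar) \circ \dhat_\g$, which is Condition (2).

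Next I would match the remaining data. The degree constraint $S_n \in \hom^{2-2n}_k(S^{>0}(\g[-1]), V)$ says precisely that each $\hbar^n S_n$ has degree $2$ and that $S_n$ kills $S^0(\g[-1]) = k$, so summing over $n$ yields both that $\varphi$ has degree $2$ and Condition (1), $\varphi(1) = 0$. For Condition (3) the key computational lemma is: a $k$-linear map $D: S(\g[-1]) \to V$ is a differential operator of order $\le n+1$ over the trivial homomorphism $S(\g[-1]) \to k \hookrightarrow V$ if and only if $D$ vanishes on $\m^{n+2}$, where $\m = S^{>0}(\g[-1])$. I would prove this by observing that, for $a$ in the augmentation ideal, the defining commutator reduces to $[D, L_a](x) = D(ax)$ since the homomorphism term vanishes, and that it suffices to test the order condition on such $a$ (the unit part contributes $[D, L_1] = 0$); hence the $(n+2)$-fold iterated commutator is $x \mapsto \pm D(a_1 \cdots a_{n+2}\, x)$. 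As the products $a_1 \cdots a_{n+2}$ with $a_i \in \m$ span the ideal $\m^{n+2} = S^{>n+1}(\g[-1])$, vanishing of all these iterated commutators is equivalent to $D(S^{>n+1}(\g[-1])) = 0$. Applied to $D = S_n$, this converts the support condition $S_n(S^{>n+1}(\g[-1])) = 0$ into Condition (3).

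The main obstacle is exactly this computational lemma: one must pin down the precise meaning of ``differential operator of order $\le n+1$ over the trivial homomorphism'' and verify its equivalence with the symmetric-degree support condition, including the reduction to testing on the augmentation ideal and the identification $\m^{n+2} = S^{>n+1}(\g[-1])$. Once this is in place, naturality in $\g$ is routine: a morphism $\g' \to \g$ of $\Li$-algebras induces a morphism $S(\g'[-1]) \to S(\g[-1])$ of coaugmented conilpotent cocommutative graded coalgebras intertwining the operators $\dhat_{\g'}$ and $\dhat_\g$, and both $\QM_V$ and the $\Hom$ functor transform by precomposition, so the bijection $S \mapsto \varphi$ commutes with these maps. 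Everything outside the lemma is a direct transcription of definitions, which is why the proof, like that of the tautological $\BViAlg^{\op}$ version preceding the theorem, is essentially bookkeeping.
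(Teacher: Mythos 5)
Your proof is correct and follows the same route as the paper's, whose entire argument is the single observation that the QME $\Dh\, e^{S/\hbar}=0$ in the convolution $\BVi$-algebra $\hom_k(S(\g[-1]),V)$ is literally Condition (2) of Definition \ref{BVi}. The additional bookkeeping you supply --- the degree count, Condition (1), and the lemma identifying differential operators of order $\le n+1$ over the trivial homomorphism with maps killing $\m^{n+2}=S^{>n+1}(\g[-1])$ --- is exactly what the paper absorbs into the ``i.e.''\ clause of Definition \ref{BVi}(3) and into the definition \eqref{QMV} of $\QM_V(\g)$, so it is a welcome verification rather than a departure.
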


\begin{proof}
The proof is almost a tautology: one just needs to observe that the
equation $\dhat_V \circ \exp (S/\hbar) = \exp (S/\hbar) \circ \dhat_\g$
defining $S$ as a $\BVi$-morphism is equivalent to the QME $\Dh
e^{S/\hbar} = 0$ for the $\BVi$-algebra $\hom_k (S(\g[-1]), V)$.
\end{proof}

\subsection{Quantum deformation functor associated to a bi-dg-Lie algebra}

Not\-with\-stand\-ing the apparent consistency of the quantum
deformation setup in the previous section, actual examples of quantum
deformations, see Section \ref{examples}, require certain modification
of the quantum deformation functor. Suppose $S(\g[-1])$ is the dg-BV
algebra arising from a bi-dg-Lie algebra $\g$, as in Example
\ref{bi-dg}. In this case, $\g$ is a bi-dg-Lie subalgebra of
$S(\g[-1])[1]$, the QME \eqref{QME} in $S(\g[-1])$ restricts to an
equation in $\g$, and the following subfunctor of the quantum
deformation functor becomes important:
\begin{equation}
\label{QMF}
\QM_\g (R) := \{ S \in \g[[\hbar]]^1 \, \widehat{\otimes}\; \m \; | \;
dS + \hbar \Delta S + \frac{1}{2} [S,S] = 0 \}.
\end{equation}
Note that $\g[[\hbar]]^1 = S^1 (\g[-1])[[\hbar]]^{2}$ and we have a
natural inclusion of functors $\QM_\g (R) \subseteq \QM_{S(\g[-1])}
(R)$. On the other hand, we have a natural identification
\[
\QM_\g (R) = \MC_{\g[[\hbar]]} (R),
\]
where $\g[[\hbar]]$ is considered as a dg-Lie algebra over $k$ with a
differential $\dhat = d + \hbar \Delta$. Thus, Theorems \ref{Q} and
\ref{Chuang-Lazarev} are applicable and we can state the following
easy corollary.

\begin{corollary}
Given a bi-dg-Lie algebra $\g$, the quantum deformation functor
$\QM_g: \FLAff^{\op} \to \Set$ is representable by the pointed formal
dg-manifold $\g[1][[\hbar]]$, and so is the extension of this functor
to the category of pointed formal dg-manifolds over $k[[\hbar]]$. In
other words, we have natural isomorphisms
\begin{gather*}
\QM_\g (R) \xrightarrow{\sim} \Mor_{\PFDGMan} (\Spec R,
\g[[\hbar]][1])\\ \QM_\g (\g') \xrightarrow{\sim}
\Mor_{\PFDGMan/k[[\hbar]]} (\g'[[\hbar]][1], \g[[\hbar]][1]),
\end{gather*}
for a complete local algebra $R$ and a bi-dg-Lie algebra $\g'$ or a
more general $\Li$-algebra $\g'[[\hbar]]$ over $k[[\hbar]]$.
\end{corollary}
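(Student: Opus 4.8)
The plan is to deduce everything from the two classical representability results already in hand, Theorems~\ref{Q} and~\ref{Chuang-Lazarev}, by exploiting the identification $\QM_\g(R) = \MC_{\g[[\hbar]]}(R)$ recorded immediately before the statement. The first task is to confirm that $\g[[\hbar]]$, with the bracket extended $\hbar$-bilinearly and the combined differential $\dhat = d + \hbar\Delta$, is a genuine dg-Lie algebra. Antisymmetry and the Jacobi identity are inherited verbatim since $\hbar$ is central, so the only point to verify is $\dhat^2 = 0$, and here
\[
\dhat^2 = d^2 + \hbar\,[d,\Delta] + \hbar^2\,\Delta^2 = 0
\]
uses exactly the three defining relations of a bi-dg-Lie algebra. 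I would also record the degree bookkeeping: because $\abs{\hbar} = 2$ and $\abs{\Delta} = -1$, the term $\hbar\Delta$ has degree $1$, so $\dhat$ is an honest degree-one differential and the degree-one Maurer-Cartan elements of $\g[[\hbar]]$ are precisely the degree-one solutions of \eqref{QMF}.

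With $\g[[\hbar]]$ so recognized, the first isomorphism is an immediate instance of Theorem~\ref{Q} applied to the dg-Lie algebra $\g[[\hbar]]$ in place of $\g$. The crucial remark legitimizing this substitution is that the proof of Theorem~\ref{Q} invokes only the finiteness of $(R,\m)$---through the bijection between coalgebra homomorphisms $R^* \to S(\g[[\hbar]][1])$ and elements of $(\g[[\hbar]]\,\widehat{\otimes}\,\m)^1$---and never any finiteness of the dg-Lie algebra itself. The argument therefore goes through verbatim, and since $(\g[[\hbar]])[1] = \g[1][[\hbar]]$, it delivers $\QM_\g(R) \xrightarrow{\sim} \Mor_{\PFDGMan}(\Spec R, \g[[\hbar]][1])$.

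For the second isomorphism I would apply Theorem~\ref{Chuang-Lazarev} in the same manner, with $\g[[\hbar]]$ again serving as the representing dg-Lie algebra and the test object being the $\Li$-algebra $\g'[[\hbar]]$ obtained from a bi-dg-Lie (or general $\Li$-) algebra $\g'$ by extending scalars to $k[[\hbar]]$. Since the whole picture is now $k[[\hbar]]$-linear rather than $k$-linear, the ambient category is $\PFDGMan/k[[\hbar]]$; the cofreeness and convolution-exponential bijections underlying Theorem~\ref{Chuang-Lazarev} are insensitive to the ground ring beyond its being of characteristic zero, so they transport to this setting once all tensor products are completed $\hbar$-adically.

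The step I expect to demand the most care is precisely this passage to $k[[\hbar]]$ in the second part: one must check that the $\hbar$-adic completions and the completed tensor products $\g\,\widehat{\otimes}\,\m$ interact compatibly, so that the match between solutions living in $\hom_k(S(\g'[[\hbar]][-1]), \g[[\hbar]])$ and morphisms of pointed formal dg-manifolds over $k[[\hbar]]$ is a true bijection and not merely a formal coincidence. Everything else reduces to the sign and degree-shift bookkeeping that the conventions section and \eqref{QMF} have already arranged to be consistent.
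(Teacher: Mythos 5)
Your proposal is correct and follows exactly the route the paper intends: the paper records the identification $\QM_\g(R)=\MC_{\g[[\hbar]]}(R)$ for the dg-Lie algebra $(\g[[\hbar]],\dhat=d+\hbar\Delta)$ immediately before the statement and then declares the corollary an immediate consequence of Theorems~\ref{Q} and~\ref{Chuang-Lazarev}, giving no further proof. Your verification that $\dhat$ is a degree-one square-zero differential and your care about the $k[[\hbar]]$-linear setting in the second isomorphism simply make explicit the details the paper leaves to the reader.
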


\subsection{Examples of quantum deformations}
\label{examples}

Now we can discuss examples of quantum deformations, described by
solutions of QME in appropriate bi-dg-Lie and $\BVi$-algebras.

\begin{example}
\label{twisted-End}
Let $\OO$ be a modular operad, $V$ be a dg-vector space with
finite-dimensional graded components and an \emph{inner product of
  degree $-1$}, \emph{i.e}., a nondegenerate linear map $S^2(V) \to
k[-1]$, and $\End_V$, $\End_V ((g,n)) := V^{\otimes n}$, be the
endomorphism twisted modular operad of $V$. Consider the tensor
product $\OO \otimes \End_V$, which is a twisted modular operad with
components $(\OO \otimes \End_V) ((g,n)) := \OO((g,n)) \otimes \End_V
((g,n))$. Barannikov in \cite{barannikov}, cf.\ also \cite{kwz},
constructs, in fact, a bi-dg-Lie algebra structure $(\g, [-,-], d,
\Delta)$, see Example \ref{bi-dg}, on a shifted direct sum $\g :=
\bigoplus_{g,n} (\OO \otimes \End_V) ((g,n))_{S_n} [1] =
\bigoplus_{g,n} \OO((g,n)) \otimes_{S_n} V^{\otimes n}[1]$ of
$S_n$-coinvariants of the components of $\OO \otimes \End_V$. As we
know from Example \ref{bi-dg}, the bi-dg-Lie algebra $\g$ gives rise
to a dg-BV-algebra $S(\g[-1])$.

According to Barannikov \cite{barannikov}, see also \cite{kwz},
solutions $S \in \g[[\hbar]]^1$ of the Quantum Master Equation
\begin{equation}
\label{QME-no-m}
d S + \hbar \Delta S + \frac{1}{2} [S,S] = 0
\end{equation}
are in bijection with $\F(\OO)$-algebra structures on $V$, where
$\F(\OO)$ is the Feynman transform \cite{getzler-kapranov} of the
modular operad $\OO$. Thus, we may think of the quantum deformation
functor \eqref{QMF} describing deformations of the trivial
$\F(\OO)$-algebra structure on $V$ corresponding to the trivial
solution $S = 0$ of the QME. Deformations of the $\F(\OO)$-algebra
corresponding to a nontrivial solution $S_0 \in \g[[\hbar]]^1$ of
\eqref{QME-no-m} may be described by solutions of the QME
\[
{\dhat}' S + \frac{1}{2} [S,S] = 0, \qquad S \in \g[[\hbar]]^1,
\]
with ${\dhat}' = d + \hbar \Delta + [S_0, -]$. As in Example
\ref{bi-dg}, the bi-dg-Lie algebra $(\g[[\hbar]], d + [S_0,-], \Delta,
    [-,-])$ over $k[[\hbar]]$ gives rise to a $\BVi$-algebra
    $S(\g[-1])$ with the BV operator ${\dhat}' = d + \{S_0, -\} + \hbar
    \Delta$ being a formal power series in $\hbar$ in which all the
    terms but those by $\hbar^1$ are derivations.

The simplest example of a modular operad $\OO$ is the modular envelope
of the commutative operad: $\OO((g,n)) := k$ for all $(g,n)$ in the
stable range. The corresponding notion of an $\F(\OO)$-algebra was
studied by Markl \cite{markl:loop}, who called it a \emph{loop
  homotopy Lie algebra}. It is a modular analogue of the (properadic)
notion of an $\IBLi$-algebra, which we will look at later.

Another standard example of a modular operad is the homology operad
$\OO((g,n)) \linebreak[0] = H_\bullet (\overline{\M}_{g,n}; k)$ of the
Deligne-Mumford moduli spaces $\overline{\M}_{g,n}$ of stable
algebraic curves of genus $g$ with $n$ punctures with respect to
attaching. In this case, the notion of an $\F(\OO)$-algebra will be a
higher-genus, homotopy extension of the notion of a \emph{gravity
  algebra}, see \cite{getzler:moduli95}.
\end{example}

\begin{example}
\label{twisted-O}
This is a twisted version of the previous example. Let $\OO$ be a
twisted modular operad, $V$ be a dg-vector space with
finite-dimensional graded components and an inner product of degree 0,
and $\End_V$, $\End_V ((g,n)) := V^{\otimes n}$, be the endomorphism
modular operad of $V$. Consider the tensor product $\OO
\otimes \End_V$, which is a twisted modular operad with components
$(\OO \otimes \End_V) ((g,n)) := \OO((g,n)) \otimes \End_V
((g,n))$. Again, Barannikov in \cite{barannikov}, cf.\ \cite{kwz},
constructs, a bi-dg-Lie-algebra structure $(\g, [-,-], d, \Delta)$,
see Example \ref{bi-dg}, on a shifted direct sum $\g :=
\bigoplus_{g,n} (\OO \otimes \End_V) ((g,n))_{S_n} [1] =
\bigoplus_{g,n} \OO((g,n)) \otimes_{S_n} V^{\otimes n}[1]$ of
$S_n$-coinvariants of the components of $\OO \otimes \End_V$. The
bi-dg-Lie algebra $\g$ is part of a dg-BV-algebra $S(\g[-1])$ up to
shift.

Again, as per \cite{barannikov,kwz}, solutions of the Quantum Master
Equation
\[
d S + \hbar \Delta S + \frac{1}{2} [S,S] = 0
\]
in $\g[[\hbar]]^1$ are in bijection with $\F(\OO)$-algebra structures
on $V$, where $\F(\OO)$ is the Feynman transform of the twisted modular
operad $\OO$. 

Deformations of the $\F(\OO)$-algebra corresponding to a nontrivial
solution $S_0 \in \g[[\hbar]]^1$ of the QME are obtained by redefining
${\dhat}' := \dhat + [S_0, -]$ and considering the QME
${\dhat}' S + \frac{1}{2} [S,S] = 0$, $S \in \g[[\hbar]]^1$. As
in the previous example, this leads to a $\BVi$-algebra of a
particular type, with all the BV operators $\Delta_n$, except
$\Delta_2$, being derivations.

The model example of a twisted modular operad is the homology operad
$\OO((g,n)) \linebreak[0] = H_\bullet (\M_{g,n}; k)$ of the moduli
spaces $\M_{g,n}$ of algebraic curves of genus $g$ with $n$ punctures
with respect to twist-gluing. In this case, the notion of an
$\F(\OO)$-algebra will be a higher-genus, homotopy extension of the
notion of a \emph{hypercommutative}, or \emph{WDVV-algebra}, see
\cite{getzler:moduli95} and \cite{me:WDVV}.

There are various versions of the moduli-space example of a twisted
modular operad. One, in a different language, appeared in the work of
B.~Zwiebach \cite{zwiebach} and A.~Sen and Zwiebach
\cite{sen-zwiebach}. Translated to the language of our paper, they
considered $S^1$-equivariant chains on the moduli space of Riemann
surfaces with holomorphic disks. This gives a twisted modular
dg-operad. Another version of this operad, which uses the real version
of the Deligne-Mumford compactification, originated in the paper
\cite{ksv} of T.~Kimura, Stasheff and myself. See also Costello
\cite{costello:tft}. A solution to the QME in these cases is regarded
as a universal topological quantum field theory, the 2d, chain-level,
closed-string, nonperturbative flavor to be more precise.
\end{example}

\section{Quantizing Deformation Theory I}
\label{IBL-qd}

Recall from Example \ref{IBL} that an involutive Lie bialgebra $\g$
gives rise to a dg BV-algebra structure on the symmetric algebra
$S(\g[-1])$. Not surprisingly, a general $\BVi$-algebra structure on
the symmetric algebra $S(\g[-1])$ of a suspended graded vector $\g$ is
equivalent to the structure of a \emph{homotopy involutive Lie
  bialgebra}, called an $\IBLi$-\emph{algebra}, on $\g$. This is
actually the definition thereof, see \cite{cieliebak-fukaya-latschev}!
Moreover, the notion is equivalent to that of an
$\Omega(\CoFrob)$-algebra as per \cite[Theorem
  4.10]{drummond-cole-terilla-tradler}. Here $\CoFrob$ is a certain
co-Frobenius coproperad and $\Omega$ is the cobar construction,
producing a dg-properad. The notion of an $\IBLi$-algebra generalizes
not only that of an involutive Lie bialgebra but also the notion of a
bi-dg-Lie algebra, which played an important role in the previous
section. Indeed, both the involutive Lie bialgebra and bi-dg-Lie
algebra structures on a graded vector space $\g$ induce
dg-BV-structures on the symmetric algebra $S(\g[-1])$, the
Chevalley-Eilenberg complex of $\g$, see Examples \ref{IBL} and
\ref{bi-dg}.

In \emph{Quantizing Deformation Theory} \cite{terilla}, Terilla
conjectured the existence of quantized deformation theory, in which
commutative $k$-algebras $R$ would be replaced with (commutative)
Frobenius algebras and the Maurer-Cartan equation in an $\Li$-algebra
would be replaced with a master equation in an $\IBLi$-algebra
$\g$. The rationale is that the properad $\Frob$ describing Frobenius
algebras is a unit in the monoidal category of properads, just like
the operad $\Com$ describing commutative algebras is a unit in the
monoidal category of operads. Equivalently, if $V$ is an algebra over
a properad $\P$ and $F$ is a Frobenius algebra, then $V \otimes F$ is
again a $\P$-algebra.  Moreover, an $\IBLi$-algebra is equivalent to
an algebra over the dg-properad $\Omega(\CoFrob)$, whereas an
$\Li$-algebra is equivalent to an algebra over the operadic cobar
construction $\Omega(\CoCom)$ for the cocommutative co-operad
$\CoCom$.

This is an extremely striking analogy, but the current paper falls
short of proving Terilla's conjecture. However, I would like to
convince the reader that staying within the good old world of
deformations over commutative parameters still produces an interesting
quantization of deformation theory.

The matter is that there is a subtle difference between extending the
structure of an algebra over a properad $\P$ on a dg-vector space $V$
to a tensor product $V \otimes_k R$ and making a base change from $k$
to $R$. The structure of a commutative $k$-algebra on $R$ is not
enough to define the structure of a $\P$-algebra on $V \otimes R$ over
$k$. As mentioned above, endowing $R$ with a Frobenius-algebra
structure will suffice. On the other hand, when making a base change,
we are rather interested in a $\P \otimes R$-algebra structure on
$V \otimes R$ over $R$, which is always there as long as $R$ is a
commutative $k$-algebra. At the level of operations, not every
properadic operation with values, say, in the tensor square
$V \otimes V$ extends to an operation with values in
$(V \otimes R) \otimes (V \otimes R)$ but it does, if all we want is
an operation with values in $(V \otimes R) \otimes_R (V \otimes R)$.

What this means in the case of an $\IBLi$-algebra $\g$ and a complete
local algebra $R$ is that $\g \otimes R$ and $\g \widehat{\otimes} R$
are also $\IBLi$-algebras over $R$. Likewise, if $\g'$ is an
$\Li$-algebra, then $\hom_k (S(\g'[-1]), \g)$ is an $\IBLi$-algebra
over the dg-commutative algebra $S(\g'[-1])^*$. Accordingly,
$S(\g[-1]) \otimes R$, $S(\g[-1]) \widehat{\otimes} R$ and
$\hom_k (S(\g'[-1]), S(\g[-1]))$ are $\BVi$-algebras over
(dg-)commutative algebras $R$ and $S(\g'[-1])^*$, respectively. Thus,
the functors $\QM_V (R)$, see \eqref{QM}, and $\QM_V(\g')$, see
\eqref{QMV}, for $V = S(\g[-1])$ are well-defined and Theorems
\ref{first} and \ref{second} show that these functors are
representable.

\bibliographystyle{amsalpha}
\bibliography{qdt}

\end{document}